\documentclass[12pt]{article}
\usepackage{amsmath,amssymb,amsthm, amsfonts,tcolorbox}
\usepackage{hyperref}
\hypersetup{
     colorlinks=true,
     linkcolor=black,
     filecolor=black,
     citecolor = black,      
     urlcolor=cyan,
     }

\usepackage[margin=1in]{geometry} 
\usepackage[nameinlink]{cleveref}
\usepackage{graphicx,color}

\theoremstyle{definition}
\usepackage{url}
\usepackage{dsfont} 
\usepackage{color}
\definecolor{red}{rgb}{1,0,0}

\definecolor{blue}{rgb}{0,0,1}

\definecolor{green}{rgb}{0,.6,0}

\usepackage{float}
\usepackage{lineno}
\usepackage{tabularx}
\usepackage{tikz,pgfplots}
\usetikzlibrary{decorations.pathreplacing,calligraphy}
\usetikzlibrary{shapes.geometric}
\usetikzlibrary{shapes.multipart}
\usetikzlibrary{arrows}

\usetikzlibrary{calc}
\usetikzlibrary{arrows.meta}
\usetikzlibrary{arrows}
\usetikzlibrary{patterns}
\usetikzlibrary{decorations.markings}


\newtheorem{thm}{Theorem}[section]
\newtheorem{cor}[thm]{Corollary}
\newtheorem{lem}[thm]{Lemma}
\newtheorem{prop}[thm]{Proposition}
\newtheorem{conj}[thm]{Conjecture}
\newtheorem{obs}[thm]{Observation}

\newtheorem{exmp}[thm]{Example}
\newtheorem{con}[thm]{Construction}
\newtheorem{claim}[thm]{Claim}

\theoremstyle{remark}

\theoremstyle{definition}

\theoremstyle{definition}



\newcommand{\floor}[1]{\lfloor #1 \rfloor}

\newcommand{\N}{\mathbb{N}}



\newcommand{\rb}{\operatorname{rb}}

\newcommand{\bit}{\begin{itemize}}
\newcommand{\eit}{\end{itemize}}
\newcommand{\ben}{\begin{enumerate}}
\newcommand{\een}{\end{enumerate}}
\newcommand{\beq}{\begin{equation}}
\newcommand{\eeq}{\end{equation}}
\newcommand{\bea}{\begin{eqnarray}} 
\newcommand{\eea}{\end{eqnarray}}
\newcommand{\bpf}{\begin{proof}}
\newcommand{\epf}{\end{proof}\ms}
\newcommand{\bmt}{\begin{bmatrix}}
\newcommand{\emt}{\end{bmatrix}}
\newcommand{\ms}{\medskip}


\title{Note on vertex disjoint rainbow triangles in edge-colored graphs}

\author{ J\"urgen Kritschgau\thanks{ Fariborz Maseeh Department of Mathematics and Statistics, Portland State University, Portland, OR, USA (jkritsch@pdx.edu). Research is supported by NSF RTG grant DMS-2136228.} \and tahda queer\thanks{Dept.~of Mathematics, Hunter College, City University of New York, NY, USA (taqueer@proton.me)} \and Cyrus Young\thanks{Dept.~of Mathematics, University of California, Irvine, CA, USA (cyrusjy@uci.edu)} \and Wohua Zhou\thanks {Dept.~of Mathematics,  California State University, East Bay, CA, USA (wzhou31@horizon.csueastbay.edu)}}

\date{July 10, 2023}

\pgfplotsset{compat=1.18} 

\begin{document}
\maketitle

\begin{abstract}

  Given an edge-colored graph $G$, we denote the number of colors as $c(G)$, and the number of edges as $e(G)$.
  An edge-colored graph is rainbow if no two edges share the same color.
  A proper $mK_3$ is a vertex disjoint union of $m$ rainbow triangles.
  Rainbow problems have been studied extensively in the context of anti-Ramsey theory, and more recently, in the context of Tur\'{a}n problems.
  B. Li. et al. 
  \cite{li2014rainbow} 
  found that a graph must contain a rainbow triangle if  $e(G)+c(G) \geq \binom{n}{2}+ n$.
  L. Li. and X. Li. 
  \cite{li2022edge}
  conjectured a lower bound on $e(G)+c(G)$ such that $G$ must contain a proper $mK_3$. 
  In this paper, we provide a construction that disproves the conjecture. 
  We also introduce a result that guarantees the existence of $m$ vertex disjoint rainbow $K_k$ subgraphs in general host graphs, and a sharp result on the existence of proper $mK_3$ in complete graphs.

\end{abstract}


\section{Introduction}

All graphs in this paper are finite simple graphs.
Given a graph $G$, we denote the vertex set and edge set of $G$ by $V(G)$ and $E(G)$, respectively.
An \emph{edge-colored graph} is a tuple $(G, C)$, where $G$ is a graph and $C$ is a mapping from $E(G)$ to $\N$.
We use $C(e)$ to denote the color of an edge $e \in E(G)$, and $C(G)$ to denote the set of colors of all edges in $G$.
We let $e(G) = |E(G)|$ and $c(G) = |C(G)|$.
A clique on $n$ vertices is denoted $K_n$.
If we have two graphs $G$ and $H$, the \emph{join} of $G$ and $H$, denoted $G \vee H$, is the graph obtained by taking the disjoint union of $G$ and $H$ and adding an edge $uv$ for every $u \in V(G)$ and every $v \in V(H)$.

A  graph $G$ is called \emph{rainbow} if no two edges share the same color.
A collection of subgraphs $\{G_i\}_{i \in I}$ of a graph is called \emph{vertex-disjoint} if for any $i,j \in I, i\neq j$ implies $V(G_i)\cap V(G_j)=\emptyset$. 
We say a graph $G$ is a \emph{proper $mK_3$} if $G$ is a vertex-disjoint collection of rainbow $K_3$. Note that by definition, a rainbow triangle is a rainbow $K_3$ and a proper $K_3$.
For any terminology used but not defined here, we refer to \cite{IntroGraphTheory}.

Rainbow matching problems can be generalized to rainbow $K_3$ problems. 
By Tur\'an's theorem, any graph $G$ on $n$ vertices with $c(G)> \floor{n^2/4}$ must have a rainbow $K_3$. 
B. Li. et al. were able to improve this result to include $e(G)$ with the following result:

\begin{thm}(Theorem 1 in \cite{li2014rainbow}).\label{thm: m=1}
    Let $G$ be an edge-colored graph on $n$ vertices. If $e(G) + c(G) \geq \binom{n+1}{2}$, then $G$ contains a rainbow triangle.
\end{thm}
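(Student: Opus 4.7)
The plan is to induct on $n$. The base case $n=3$ is immediate, since $e(G)+c(G)\geq \binom{4}{2}=6$ together with $e(G),c(G)\leq 3$ forces $G=K_3$ with all three edges carrying distinct colors. For the inductive step, for each vertex $v$ I would define $k(v)$ to be the number of colors whose edges are \emph{all} incident to $v$, and use the key identity
\[
e(G-v)+c(G-v) \;=\; e(G)+c(G)-d(v)-k(v),
\]
since deleting $v$ removes exactly $d(v)$ edges and eliminates exactly $k(v)$ colors from the palette. The easy subcase is when some vertex has $d(v)+k(v)\leq n$: then $e(G-v)+c(G-v)\geq \binom{n+1}{2}-n=\binom{n}{2}$, the inductive hypothesis applies to $G-v$, and the resulting rainbow triangle lives in $G$ as well.

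The crux is the remaining case: every vertex satisfies $d(v)+k(v)\geq n+1$. Here I would pick $v$ maximizing $k(v)$ (noting $k(v)\geq 2$ because $d(v)\leq n-1$) and select one representative edge per $v$-private color to obtain $U\subseteq N(v)$ with $|U|=k(v)$ such that the edges $\{vu:u\in U\}$ carry pairwise distinct, $v$-private colors. If any two $u,w\in U$ are adjacent, then $uvw$ is a rainbow triangle: $C(uv)\neq C(vw)$ by construction, and $C(uw)$ cannot be $v$-private since the edge $uw$ avoids $v$, so it agrees with neither. Hence $U$ must be independent in $G$. Then each $u\in U$ satisfies $d(u)\leq n-k(v)$ (its only possible neighbors lie in $V(G)\setminus U$), which forces $k(u)\geq n+1-d(u)\geq k(v)+1$, contradicting the maximality of $k(v)$.

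The main obstacle is the second case. A naive attempt to close it by summing $\sum_v(d(v)+k(v))\geq n(n+1)$ and using $\sum_v k(v)\leq 2c(G)$ only recovers the hypothesis $e(G)+c(G)\geq \binom{n+1}{2}$ and yields no contradiction, so a genuine structural extremal argument is required. The decisive idea is to pick the vertex with the largest concentration of private colors and exploit the forced independence of the representative neighborhood $U$ to strictly increase $k$ at another vertex, breaking the extremal choice. Once this and the bookkeeping on $|U|=k(v)$ are in place, the two cases close the induction.
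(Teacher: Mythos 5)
Your argument is correct and complete. The paper itself does not prove this statement --- it is quoted as Theorem 1 of \cite{li2014rainbow} --- so there is no in-paper proof to compare against; but your write-up stands on its own. The bookkeeping checks out: $k(v)=c(G)-c(G-v)$ is exactly the number of colors all of whose edges meet $v$, so the deletion identity is right, and Case 1 closes the induction since $\binom{n+1}{2}-n=\binom{n}{2}$ is the threshold for $n-1$ vertices. In Case 2 the set $U$ really has size $k(v)$ (distinct private colors give distinct edges at $v$, hence distinct endpoints), the triangle $uvw$ is rainbow because $uw$ misses $v$ and therefore cannot carry a $v$-private color, and the independence of $U$ gives $d(u)\leq n-k(v)$, hence $k(u)\geq k(v)+1$, contradicting the extremal choice of $v$. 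You are also right that the global double-count ($\sum_v k(v)\leq 2c(G)$, since a color private to two vertices is a single edge) merely recovers the hypothesis, so the local extremal argument is genuinely needed; your way of supplying it is clean and is essentially the inductive vertex-deletion strategy used in the cited source. One cosmetic remark: $k(v)\geq 2$ is more than you need --- $U\neq\emptyset$ only requires $k(v)\geq 1$ --- but it costs nothing.
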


This theorem was generalized to all complete graphs. 
Let $t_{n,k}$ denote the number of edges in the complete $k$-partite graph on $n$ vertices with balanced part sizes. 

\begin{thm}(Theorem 6 in \cite{XU2016193})\label{thm:cliques}
    Let $G$ be an edge-colored graph on $n$ vertices and $n\geq k\geq 4$. If $e(G)+c(G)\geq \binom{n}{2} +t_{n,k-2}+2,$ then $G$ contains a rainbow $K_k$.
\end{thm}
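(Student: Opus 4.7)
The plan is to first exploit the hypothesis to locate a rainbow $K_{k-1}$, and then extend it to a rainbow $K_k$ via a counting argument on failed extensions.

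Since $e(G) \leq \binom{n}{2}$, the assumption $e(G) + c(G) \geq \binom{n}{2} + t_{n,k-2} + 2$ forces $c(G) \geq t_{n,k-2} + 2$. I would select a \emph{representative rainbow subgraph} $R \subseteq G$ containing exactly one edge of each color, so that $|E(R)| = c(G) > t_{n,k-2} = \mathrm{ex}(n, K_{k-1})$. Tur\'an's theorem applied to $R$ then yields a copy of $K_{k-1}$, which is automatically rainbow since $R$ is; call this rainbow copy $H$, on vertex set $\{v_1, \dots, v_{k-1}\}$.

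Next, I would try to extend $H$ to a rainbow $K_k$ by appending some vertex $u \in V(G) \setminus V(H)$. For $u$ to fail, one of the following must hold: (a) some edge $uv_i$ is missing from $G$; (b) two edges $uv_i, uv_j$ share a color; or (c) some $uv_i$ uses a color already appearing in $H$. Supposing for contradiction that no rainbow $K_k$ exists, every such $u$ suffers at least one obstruction, and I would charge each obstruction against either a missing edge in $G$ (type (a)) or a duplicate-color edge in $E(G) \setminus E(R)$ (types (b) and (c), after possibly reselecting $R$ to include the colliding edge in its place). The target is to show that the combined deficit in $e(G) + c(G)$ exceeds the one unit of slack available above $\binom{n}{2} + t_{n,k-2} + 1$, yielding the contradiction.

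The main obstacle will be reaching the sharp threshold $t_{n,k-2}+2$: a single rainbow $K_{k-1}$ produces a deficit of only $O(n)$, which is far too weak on its own. To close the gap, I would either vary $H$ over the many copies of $K_{k-1}$ guaranteed in $R$ by Tur\'an-type supersaturation (since $|E(R)|$ exceeds $t_{n,k-2}$ by at least $2$), or else invoke the stability version of Tur\'an's theorem: any near-extremal $K_{k-1}$-free subgraph resembles $T(n,k-2)$, which sharply constrains where color-collisions in $G \setminus R$ can live. The delicate bookkeeping required to avoid double-counting obstructions across different copies of $H$, while matching the Tur\'an bound $t_{n,k-2}$ rather than the weaker $t_{n,k-1}$, is where the proof will demand the most care, and is likely handled by picking $H$ to lie in a part of $R$ that maximizes extendability before starting the count.
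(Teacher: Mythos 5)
This statement is quoted from Xu, Hu, Wang and Zhang \cite{XU2016193} and is not proved in the present paper, so your attempt can only be judged on its own terms. Your opening reduction is sound: $e(G)\le\binom{n}{2}$ forces $c(G)\ge t_{n,k-2}+2>\mathrm{ex}(n,K_{k-1})$, so a representative subgraph $R$ with one edge per color contains a (necessarily rainbow) $K_{k-1}$. The genuine gap is exactly the one you flag yourself: the extension step does not close, and no amount of local bookkeeping of the kind you describe can close it. The hypothesis is equivalent to
\[
\Bigl(\tbinom{n}{2}-e(G)\Bigr)+\bigl(e(G)-c(G)\bigr)\;=\;\tbinom{n}{2}-c(G)\;\le\;\tbinom{n}{2}-t_{n,k-2}-2,
\]
so the total ``budget'' of missing edges plus color repetitions that $G$ may carry is of order $n^2/(k-2)$. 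Each vertex $u$ outside your rainbow $K_{k-1}$ contributes at most one obstruction, so a single copy yields a deficit of at most $n-k+1$, and even summing over all copies of $K_{k-1}$ in $R$ (supersaturation at excess $2$ gives only polynomially many copies, with massive overcounting of shared obstructing edges) there is no route to a contradiction against a quadratic budget. This is not a presentational issue: the extremal colorings of $K_n$ with $t_{n,k-2}+1$ colors contain many rainbow copies of $K_{k-1}$, none of which extend, so any argument of the form ``find one rainbow $K_{k-1}$ and count why it fails to extend'' is structurally unable to prove the theorem. The stability suggestion is a plausible direction but is not carried out, and making it work would require controlling the interaction between the near-Tur\'an structure of $R$ and the repeated-color edges of $G\setminus R$, which is the entire content of the theorem.

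For comparison, the known proofs of this statement do not go through a single extendable clique. The complete-graph case is the anti-Ramsey theorem $\rb(K_n,K_k)=t_{n,k-2}+2$ of Erd\H{o}s--Simonovits--S\'os and Montellano-Ballesteros--Neumann-Lara, and the non-complete case is reduced to it by induction on $n$: if some vertex $v$ has degree plus color degree at most $(n-1)+(t_{n,k-2}-t_{n-1,k-2})$, delete it and observe that $G-v$ still satisfies the hypothesis on $n-1$ vertices; otherwise every vertex has large degree plus color degree and one argues directly. If you want a complete proof, that vertex-deletion induction is the step to replace your extension argument with; the arithmetic identity $\binom{n}{2}+t_{n,k-2}-\binom{n-1}{2}-t_{n-1,k-2}=(n-1)+(t_{n,k-2}-t_{n-1,k-2})$ is what makes the threshold reproduce itself under deletion.
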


Let $\rb(G,H)$ denote the minimum number of colors required such that every edge coloring of $G$ with at least $\rb(G,H)$ colors contains a rainbow $H$.

\begin{cor}(Corollary 1 in \cite{XU2016193})
\label{cor:cliques}
Let $G$ be an edge-colored graph on $n$ vertices and $n\geq k \geq 3$. 
If $e(G)+c(G)\geq \binom{n}{2} + \rb(K_n,K_k)$, then $G$ contains a rainbow $K_k$.
\end{cor}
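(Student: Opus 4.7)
The plan is to induct on the number of missing edges $s := \binom{n}{2} - e(G)$, observing that the hypothesis rewrites as $c(G) \geq s + \rb(K_n,K_k)$. The base case $s = 0$ gives $G = K_n$ with at least $\rb(K_n,K_k)$ colors, so the definition of $\rb(K_n,K_k)$ immediately produces a rainbow $K_k$.

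For the inductive step with $s \geq 1$, I would fix any missing edge $uv$. For each color $c$ (existing or new) assigned to $uv$, the resulting graph $G_c := G + uv$ has one fewer missing edge and satisfies $e(G_c) + c(G_c) \geq \binom{n}{2} + \rb(K_n,K_k) + 1$, so the inductive hypothesis produces a rainbow $K_k$ in $G_c$. If that rainbow $K_k$ avoids the edge $uv$, it already lies inside $G$ and we are done. An alternative framing of the same idea: extend $G$ to $K_n$ by assigning the $s$ missing edges colors drawn from $C(G)$; the resulting complete graph carries $c(G) \geq \rb(K_n,K_k)$ colors and therefore contains a rainbow $K_k$ by the definition of $\rb(K_n,K_k)$, and one wants this copy to avoid the newly colored edges.

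The main obstacle is the scenario in which, no matter how we color $uv$ (equivalently, for every choice of extension), the rainbow $K_k$ produced by induction must use $uv$. Unpacking this for each $c \in C(G)$ forces a rainbow copy of $K_k - uv$ in $G$ based at $\{u,v\}$ whose color set omits $c$, producing a large family of rainbow $K_k - uv$ configurations at $\{u,v\}$ whose color palettes collectively omit every color of $C(G)$. To close the argument, I plan to double-count pairs consisting of a color together with a configuration omitting that color, which forces a lower bound on the number of such configurations of roughly $c(G)/(c(G)-\binom{k}{2}+1)$; combined with the slack $c(G) - \rb(K_n,K_k) \geq s \geq 1$, this should yield two configurations on sufficiently disjoint vertex sets that can be glued along a common vertex from $\{u,v\}$ into a rainbow $K_k$ not using $uv$. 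Making this structural gluing step precise — showing that disjoint rainbow $K_k - uv$ configurations at $\{u,v\}$ with disjoint color sets admit a rainbow $K_k$ on the $k$ vertices obtained by combining one configuration with one outside vertex — is the technical heart of the argument and the step I expect to require the most care.
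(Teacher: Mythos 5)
This statement is not proved in the paper at all: it is quoted from \cite{XU2016193}, where it is an immediate consequence of Theorem~\ref{thm:cliques} together with the Erd\H{o}s--Simonovits--S\'os evaluation $\rb(K_n,K_k)=t_{n,k-2}+2$ for $k\geq 4$, and of Theorem~\ref{thm: m=1} for $k=3$ (since $\rb(K_n,K_3)=n$). So the intended argument is a two-line reduction to an already-proved theorem, whereas you are attempting a self-contained proof from the definition of $\rb$; that would be a genuinely stronger contribution if it worked, but as written it does not close.

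The gap is in the inductive step, and it is not a matter of ``care'' but of a missing idea. First, your double count gives essentially nothing: each rainbow $K_k-uv$ configuration at $\{u,v\}$ carries only $\binom{k}{2}-1$ colors and therefore ``omits'' all but $\binom{k}{2}-1$ of the $c(G)$ colors, so the bound $c(G)/(c(G)-\binom{k}{2}+1)$ is barely above $1$; it guarantees at least two configurations but comes nowhere near guaranteeing two that are simultaneously vertex-disjoint outside $\{u,v\}$ and color-disjoint, which is what your gluing step needs. Second, the gluing lemma itself is problematic: for $k\geq 4$, replacing $v$ in one configuration by an outside vertex $x$ from another introduces $k-1$ new edges whose presence (recall $G$ is not complete, so they may simply be missing) and whose colors are completely uncontrolled by the hypotheses on the two configurations; even for $k=3$ you need the edge $w_1w_2$ between the two third vertices to exist in $G$, which is not guaranteed. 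The entire difficulty of results of this type lives exactly here, which is why the literature (and Section~4 of this paper, for the disjoint-triangles version) instead runs a global induction on $n$ using saturated-color counts $d^s_G(\cdot)$ rather than an induction on missing edges with a local repair. If you want a correct short proof, derive the corollary from Theorem~\ref{thm:cliques} and Theorem~\ref{thm: m=1} as the source paper does.
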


A related line of research is finding minimum color degree conditions that guarantee rainbow subgraphs. 
Color degree sum conditions for rainbow triangles have also been studied in \cite{li2016color}. 
J. Hu, H. Li, and D. Yang \cite{hu2020vertex} found minimum color degree conditions that guarantee vertex disjoint rainbow triangles and L. Li and X. Li \cite{li2022vertex} found degree sum conditions for disjoint rainbow cycles.  

Our work in this paper concerns finding a lower bound on $e(G) + c(G)$ for any graph $G$, such that any graph with a greater number of combined edges and colors must have a proper $mK_3$. 
Notably, this question is fairly well understood for edge disjoint rainbow triangles.
L. Li and X. Li \cite{li2022edge} showed that if $e(G)+c(G)\geq \binom{n+1}{2} + 3k-3$, then $G$ contains $k$ edge disjoint rainbow triangles. 
Furthermore, it was conjecture that if $G$ is a graph on $n \geq 5m$ vertices with $e(G) + c(G) \geq \binom{n+1}{2} +6m -6$, then $G$ contains a proper $mK_3$ (see \cite{li2022edge}). 
We disprove this conjecture with a new construction which inspires the following conjecture.

\begin{conj}\label{conj:main1}
    Let $m \geq 1$. For an edge-colored graph $G$ on $n \geq 5m +2$ vertices, if
    $$e(G) + c(G) > \binom{n}{2} +mn -\binom{m+1}{2},$$
    then $G$ admits a proper $mK_3$.
\end{conj}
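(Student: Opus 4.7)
The plan is to induct on $m$, taking the $m=1$ case (Theorem~1.1) as the base. Assuming the conjecture for $m-1$, let $G$ be an edge-colored graph on $n \geq 5m+2$ vertices with $e(G)+c(G) > \binom{n}{2} + mn - \binom{m+1}{2}$, and set $\epsilon := e(G)+c(G) - \binom{n}{2} - mn + \binom{m+1}{2} \geq 1$. The goal is to find a rainbow triangle $T$ in $G$ such that the reduced graph $G' := G - V(T)$ on $n-3$ vertices still satisfies the hypothesis for $m-1$; applying the inductive hypothesis to $G'$ then yields $m-1$ further vertex-disjoint rainbow triangles which, together with $T$, form a proper $mK_3$. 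Writing $E_T$ for the edges of $G$ incident to $V(T)$ and $C_T$ for the colors appearing only on $E_T$, the drop in $e+c$ after deletion is exactly $d(T) := |E_T| + |C_T|$. A direct computation gives
$$\left[\binom{n}{2}+mn-\binom{m+1}{2}\right] - \left[\binom{n-3}{2}+(m-1)(n-3)-\binom{m}{2}\right] = 4n+2m-9,$$
so it suffices to exhibit a rainbow triangle with $d(T) < 4n+2m-9+\epsilon$.

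Since $|E_T| \leq 3n-6$ and $|C_T| \leq |E_T|$, the worst-case damage is $6n-12$, which outruns the available budget of roughly $4n$; an arbitrary rainbow triangle will not do. My first attempt would be an averaging argument over the set $\mathcal{T}$ of rainbow triangles: reindexing gives $\sum_{T \in \mathcal{T}} |E_T| = \sum_{e \in E(G)} |\{T \in \mathcal{T} : e \in E_T\}|$, and $\sum_{T \in \mathcal{T}} |C_T|$ is controlled by the colors that have few supporting edges in $G$. Paired with a lower bound on $|\mathcal{T}|$---ideally of order $mn^2$, which could plausibly come from strengthening the proof of Theorem~1.1 into a triangle-counting statement---this would produce a rainbow triangle with sub-average damage and close the inductive step.

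The expected obstacle is that the averaging will be tight precisely on graphs resembling the extremal construction behind the conjecture, in which a small set $A$ of about $m$ vertices is rainbow-joined to the rest and every rainbow triangle meets $A$ in at least two vertices. In that regime I would pivot to a stability argument: if no rainbow triangle has damage below the budget, then $G$ must essentially look like such an extremal graph (an $A$-set rainbow-joined to a Gallai-colored clique on $V(G)\setminus A$); the excess $\epsilon \geq 1$ should then force at least one rainbow triangle using only a single vertex of $A$, and removing such a triangle preserves the inductive hypothesis (now with $m-1$ and an $(m-1)$-sized $A$-set). Making this rigorous---classifying near-extremal $G$, quantifying the role of $\epsilon$, controlling the Gallai coloring on $V(G)\setminus A$, and verifying that the inductive step survives at the sharp threshold---is where I expect the real work to lie, with the condition $n \geq 5m+2$ providing the slack needed for these manipulations.
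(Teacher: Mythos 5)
The statement you are trying to prove is stated in the paper as a \emph{conjecture}, and the paper does not prove it: the authors only establish the much weaker Proposition~\ref{thm:main1} (whose bound has an extra term of order $k(m-1)\cdot 2n$) and Theorem~\ref{thm:main2}, which handles complete host graphs with the stronger hypothesis $n \geq 9m+8$ via a greedy deletion algorithm built around the ideal-saturation function $\varphi_G$. So your proposal should be judged as an attempt at an open problem, and as such it has a genuine gap: everything hinges on producing a rainbow triangle $T$ whose deletion costs less than $4n+2m-9+\epsilon$ in $e+c$ (your arithmetic for that budget is correct), and neither of your two routes to such a $T$ is carried out. The averaging route needs a lower bound of order $mn^2$ on the number of rainbow triangles together with control of how many colors are saturated by each triple of vertices; no such counting strengthening of Theorem~\ref{thm: m=1} is known, and in near-extremal graphs (e.g.\ small perturbations of Construction~\ref{con: m>1}) every rainbow triangle meets a fixed set of $m-1$ vertices, so the average damage is concentrated exactly where deletion is most wasteful. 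The stability route is the right instinct but is stated only as a goal ("classify near-extremal $G$", "the excess $\epsilon\geq 1$ should force\ldots"); no mechanism is given for extracting structure from the failure of the averaging step, and quantitative stability for the $e(G)+c(G)$ rainbow-triangle theorem is not available off the shelf.

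It is worth comparing your plan to what the paper actually does for complete hosts. Rather than deleting a whole low-damage triangle at each step, the authors delete a \emph{single} vertex $v_1$ whenever some short sequence $v_1,\dots,v_N$ ($N\leq 3$) ideally saturates too many colors, and only delete a full rainbow triangle when one exists that saturates at most $n-i-1$ colors; the point of the single-vertex moves is that a vertex incident to many saturated colors can later be re-attached to form a new rainbow triangle disjoint from whatever was found downstream (Claim~\ref{claim:add.vertex}, Case 1). That re-attachment step is exactly where completeness is used --- the two neighbors of $v_1$ along distinctly-colored saturated edges must themselves be adjacent --- and the paper explicitly flags this as the obstruction to the general conjecture. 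Your induction-by-triangle-deletion scheme avoids that particular issue but replaces it with the unproven existence of a cheap triangle, which in the complete-graph setting is essentially the content of Claim~\ref{claim:triangle.sat} and already required the full $\varphi$ machinery and the hypothesis $n\geq 9m+8$. In short: the skeleton (induction on $m$, base case Theorem~\ref{thm: m=1}, budget $4n+2m-9$) is fine, but the load-bearing lemma is missing, and the paper's own partial results indicate it is hard.
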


In Section \ref{sec: 2}, we will describe the construction which informs our conjecture (see Construction~\ref{con: m>1}). 
We will also introduce a new result that provides a lower bound on $e(G) + c(G)$ which guarantees that $G$ contains a proper $mK_k$.
Erd\H{o}, Simonovits, and S\'{o}s showed that if $k\geq 4$, then  any edge-colored complete graph on $n-k(m-1)$ vertices with  $t_{n-k(m-1),k-2}+2$ color contains a rainbow $K_k$ (see \cite{erdos10anti}).
In other words, $\rb(K_{n-k(m-1)},K_{k})=t_{n-k(m-1),k-2}+2$ for $k\geq 4$ and $\rb(K_{n-3(m-1)}, K_3)= n$.

\begin{prop}\label{thm:main1}
    Let $k \geq 3$ and $m \geq 1$. For an edge-colored graph on $n \geq km$ vertices, if 
    $$e(G)+c(G) \geq \binom{n -k(m-1)}{2} + \rb(K_{n-k(m-1)}, K_{k}) + k(m-1)(2n-1) -k^2(m-1)^2,$$
    then $G$ contains $m$ vertex disjoint rainbow $K_k$ subgraphs.
\end{prop}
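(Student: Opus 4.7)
The plan is to proceed by induction on $m$, with $f(n,m)$ denoting the right-hand side of the hypothesized bound. The base case $m = 1$ reduces the hypothesis to $e(G)+c(G) \geq \binom{n}{2} + \rb(K_n,K_k)$, so Corollary~\ref{cor:cliques} directly yields a rainbow $K_k$.

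For the inductive step, suppose the statement holds for $m-1$ and let $G$ have $n \geq km$ vertices with $e(G) + c(G) \geq f(n,m)$. First I would verify $f(n,m) \geq f(n,1) = \binom{n}{2} + \rb(K_n,K_k)$; setting $s = k(m-1)$ and using $\binom{n}{2} - \binom{n-s}{2} = \tfrac{s(2n-s-1)}{2}$, this reduces to the monotonicity of $g(n) := \binom{n}{2} - \rb(K_n,K_k)$ in $n$. For $k=3$ we have $\rb(K_n,K_3) = n$, so $g(n) - g(n-1) = n - 2 \geq 0$; for $k \geq 4$ we use $\rb(K_n,K_k) = t_{n,k-2}+2$ together with the identity $t_{n,k-2} - t_{n-1,k-2} = n-1 - \lfloor (n-1)/(k-2)\rfloor$ (the number of cross-part edges added when a new vertex is placed into the smallest part of $T(n-1,k-2)$), which gives $g(n) - g(n-1) = \lfloor (n-1)/(k-2)\rfloor \geq 0$. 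Corollary~\ref{cor:cliques} then produces a rainbow copy $H \cong K_k$ in $G$.

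Now set $G' = G - V(H)$. The number of edges of $G$ incident to $V(H)$ is at most $k(n-1) - \binom{k}{2}$, and each color that disappears corresponds to at least one removed edge, so
$$e(G') + c(G') \geq e(G) + c(G) - 2\bigl[k(n-1) - \tbinom{k}{2}\bigr] = e(G)+c(G) - (2kn - k - k^2).$$
A direct algebraic expansion, relying on $(n-k) - k(m-2) = n - k(m-1)$ so that the $\binom{\cdot}{2}$ and $\rb$ terms of $f(n,m)$ and $f(n-k,m-1)$ coincide, gives $f(n,m) - f(n-k,m-1) = 2kn - k - k^2$. Hence $e(G')+c(G') \geq f(n-k,m-1)$, and since $n-k \geq k(m-1)$ the inductive hypothesis supplies $m-1$ vertex-disjoint rainbow $K_k$'s in $G'$; together with $H$ these complete the collection. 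The main obstacle is the monotonicity step above; the two algebraic identities that frame it are routine expansions, after which the induction runs cleanly.
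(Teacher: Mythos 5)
Your proposal is correct and follows essentially the same route as the paper: induct on $m$, use Corollary~\ref{cor:cliques} to extract one rainbow $K_k$, bound the loss in $e+c$ upon deleting its $k$ vertices by $2kn-k-k^2$, and check that this matches $f(n,m)-f(n-k,m-1)$. In fact you are slightly more careful than the paper, which applies Corollary~\ref{cor:cliques} in the inductive step without explicitly verifying $f(n,m)\geq\binom{n}{2}+\rb(K_n,K_k)$ — the monotonicity argument via $g(n)=\binom{n}{2}-\rb(K_n,K_k)$ that you supply fills that in correctly.
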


This proposition is proven with a standard induction argument, and is not close to the bound in Conjecture~\ref{conj:main1}.
Finally, for complete host graphs, we have the following bound on $c(G)$.

\begin{thm}\label{thm:main2}
    Let $m \geq 1$. For a complete edge-colored graph $G$ on $n \geq 9m+8$ vertices, if 
    $$c(G) > mn -\binom{m+1}{2},$$
    then $G$ contains a proper $mK_3$.
\end{thm}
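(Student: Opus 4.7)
The plan is to proceed by induction on $m$. The base case $m=1$ is immediate from \Cref{thm: m=1}: as $G$ is complete and $c(G) > n-1$, we have $e(G)+c(G) \geq \binom{n}{2}+n = \binom{n+1}{2}$, yielding a rainbow triangle. For the inductive step, assume the result for $m-1$ and let $G$ be complete on $n \geq 9m+8$ vertices with $c(G) > mn-\binom{m+1}{2}$. It suffices to find a rainbow triangle $T$ in $G$ with $c(G-V(T)) > (m-1)(n-3)-\binom{m}{2}$, because the induction applied to $G-V(T)$---a complete graph on $n-3 \geq 9(m-1)+8$ vertices---then supplies $m-1$ additional vertex-disjoint rainbow triangles, which combined with $T$ yield a proper $mK_3$.

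The core tool is the following vertex-removal bound: for any $S \subseteq V(G)$ with $|S|=s$, $c(G-S) \geq c(G) - sn + \binom{s+1}{2}$, since any color absent from $G-S$ has all its edges incident to $S$, and $K_n$ contains exactly $sn-\binom{s+1}{2}$ edges incident to $S$. Applied with $s=m-1$, this gives $c(G-S) \geq n-m+1 = |V-S|$, so \Cref{thm: m=1} produces a rainbow triangle in the complete subgraph $G-S$ for every $(m-1)$-subset $S \subseteq V(G)$. Equivalently, every vertex-transversal of the rainbow triangles in $G$ has size at least $m$, which in particular matches the extremal construction hinted at in \Cref{con: m>1}.

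To extract a triangle with small color loss, let $H \subseteq G$ be a representative subgraph with $e(H)=c(G)$ (one edge per color class). For any rainbow triangle $T=\{a,b,c\}$, the loss $c(G)-c(G-V(T))$ is bounded by the number of $H$-edges incident to $V(T)$, namely $d_H(a)+d_H(b)+d_H(c) - e_H(V(T))$, since each lost color has its unique $H$-representative incident to $V(T)$. Taking $S$ to be the $m-1$ vertices of largest $H$-degree forces the rainbow triangle $T$ produced in $G-S$ by the previous paragraph to lie entirely in $V-S$, so each vertex of $T$ has $d_H$-value at most $d_H(v_m)$, the $m$-th largest degree in $H$, bounding the loss by $3d_H(v_m)$. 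Closing the induction in the tight case requires $3d_H(v_m) \leq n+2m-3$, the exact loss budget implied by the hypothesis.

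The main obstacle is controlling $d_H(v_m)$: the naive averaging bound $d_H(v_m) \leq 2c(G)/m$ can be as large as roughly $2n$, well above the required threshold $(n+2m-3)/3$. A case analysis leveraging $n \geq 9m+8$ should resolve this. In the \emph{balanced} regime, where no small set of vertices concentrates $H$, a degree-sequence estimate yields $d_H(v_m)=O(m)$, and $3d_H(v_m) \ll n$ closes the argument directly. In the \emph{concentrated} regime, where a few vertices of $H$ carry most of its edges, those vertices play the role of the essential set in the extremal construction of \Cref{con: m>1}; one then either iterates the vertex-removal bound on the subgraph induced by the lighter vertices, or exploits the Gallai partition structure imposed on that portion, to locate a rainbow triangle on three light-degree vertices. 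Making this case split rigorous, with constants tuned so that $n \geq 9m+8$ suffices, is the technical heart of the proof.
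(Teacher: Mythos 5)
There is a genuine gap, and you have in effect named it yourself: the whole argument reduces to producing a rainbow triangle $T$ with $c(G)-c(G-V(T))\leq n+2m-3$ in the tight case, and that step is never carried out. The outer induction, the removal count $c(G-S)\geq c(G)-sn+\binom{s+1}{2}$, and the observation that $G-S$ still satisfies the hypothesis of Theorem~\ref{thm: m=1} for every $(m-1)$-set $S$ are all correct, but they are the easy part. The loss bound you propose, $3d_H(v_m)$ with $v_m$ the $m$-th largest degree in a representative subgraph $H$, is too crude to close the induction even after deleting the top $m-1$ degrees: take $H$ to be essentially the representative graph of Construction~\ref{con: m>1}, a union of $m$ stars of degree about $n-m$ centered at distinct vertices. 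Then $d_H(v_m)\approx n-m$, so $3d_H(v_m)\approx 3n$, triple the budget, and deleting the $m-1$ largest-degree vertices still leaves the $m$-th star center free to appear in the triangle that Theorem~\ref{thm: m=1} returns. What must actually be controlled is the degree sum $d_H(a)+d_H(b)+d_H(c)$ over a well-chosen triangle, and your balanced/concentrated dichotomy is neither made precise nor shown to be exhaustive; in particular the assertion that $d_H(v_m)=O(m)$ in the balanced regime is unsupported, and no argument at all is given in the concentrated regime beyond an appeal to Gallai-type structure.

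For comparison, the paper does not find a low-loss triangle in one shot. It runs a deletion algorithm with two modes: if some set of at most $3$ vertices ideally saturates too many colors (measured by the function $\varphi$), it deletes a single such vertex; otherwise it proves, as in Claim~\ref{claim:triangle.sat}, that every rainbow triangle saturates at most $9(m-i)+7\leq n-i-1$ colors, so an entire low-loss triangle can be deleted. The singly deleted vertices are reinserted at the end: Lemma~\ref{lem2*} guarantees each has enough distinctly colored saturated edges into the remaining graph to complete a rainbow triangle avoiding the triangles already assembled. This two-mode mechanism is exactly what replaces your unproved concentrated-regime analysis, and it is where the hypothesis $n\geq 9m+8$ enters. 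To salvage your single-step induction you would need to prove the same dichotomy, which amounts to reconstructing that machinery; as written, the proposal is a plan rather than a proof.
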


Note that by Construction~\ref{con: m>1}, the bound on the number of colors is best possible. 
However, we conjecture that the bounds on $n$ can be improved (see Conjecture~\ref{conj:main1}).

Interestingly, the minimum number of colors in a complete graph that guarantees a rainbow $mK_3$ is roughly $n^2/4$, as opposed to the roughly $mn$ colors needed to find a proper $mK_3$ (see \cite{wu2023anti} for details). 


\section{Lower Bound Constructions}\label{sec: 2}

In this section, we describe our construction, which disproves the conjecture proposed by L. Li and X. Li in \cite{li2022edge}.

First, we use $T_n$ to denote the edge-colored complete graph with vertex set $\{v_1, v_2, \dots , v_n \}$ and coloring $C:V\to \{1,\dots, n-1\}$ given by $C(v_iv_j)=i$ for $1 \leq i < j \leq n$. 
This construction is used in \cite{li2014rainbow} by B. Li et al. to prove that Theorem \ref{thm: m=1} is sharp. 
Notice that $T_n$ does not contain any rainbow triangle, as any triangle $v_i v_j v_k$ with $1 \leq i < j < k \leq n$ has edges $C(v_iv_j)=C(v_i v_k)=i$. 
Furthermore, $e(T_n)+c(T_n)=\binom{n}{2}+n-1.$

L. Li and X. Li described a proper $mK_3$-free graph in \cite{li2022edge}.
Let $G$ be a complete graph on $n$ vertices, which contains $G_0=T_{n-5m+5}$ with $V(T_{n-5m+5})= \{ v_1, v_2, \dots, v_{n-5m+5}\}$. 
For $1 \leq i \leq m-1$, let $K_5$ be a rainbow complete graph vertex disjoint from $G_{i-1}$, and let $ G_i = G_{i-1} \vee K_5$. 
Each of the edges in $K_5$ receives a color that is not in $C(G_{i-1})$, and the edges in $E_{G_i}(G_{i-1}, K_5)$ receive a single new color.
Notice that $c(G)=n-5m+4+10(m-1)+(m-1)=n+6m-7$.

We now describe our construction, which contains more colors.

\begin{con} \label{con: m>1}
    Let $G$ be an edge-colored complete graph on $n$ vertices that contains $T_{n-m+1}$.
    Every edge outside of $T_{n-m+1}$ receives a unique color, giving $\binom{n}{2}-\binom{n-m+1}{2}$ new colors.
    The total number of colors is $n-m+\binom{n}{2}-\binom{n-m+1}{2} = mn -\binom{m+1}{2}$.
    In this construction,
      \begin{equation*}
         e(G)+c(G) =\binom{n}{2} + mn -\binom{m+1}{2}.
      \end{equation*}
 \end{con} 
 
    Construction \ref{con: m>1} does not contain a proper $mK_3$, because $T_{n-m+1}$ is free of rainbow triangles, which means any rainbow triangle in $G$ has to use at least one vertex from the $m-1$ vertices outside of $T_{n-m+1}$. 
    By the pigeon hole principle, for any $m$ rainbow triangles, there are two triangles that share a vertex. 
    Therefore, $m$ copies of vertex disjoint rainbow triangles cannot exist in this construction.

When $m=1$, the constructions mentioned above are the same as $T_n$.
When $m > 1$, given $n \geq 5m$, the sum of the number of edges and colors in Construction \ref{con: m>1} is always larger than the construction raised in the end of \cite{li2022edge}, because
$\binom{n}{2}+ mn -\binom{m+1}{2}> \binom{n}{2}+n+6m-7$ when $m > 1$ and $n \geq 7+ \frac{m}{2}$.

There is a third construction, also a complete graph with no proper $mK_3$, which has more colors than Construction \ref{con: m>1} when $n < 5m-2$.
Let $G$ be a join of $T_{n-3m+1}$ and a rainbow $K_{3m-1}$, and the colors in $T_{n-3m+1}$ are not used in the rainbow $K_{3m-1}$. 
We then assign all of the edges between $T_{n-3m+1}$ and $K_{3m-1}$ to single new color. 
Notice that $c(G)=n-3m+\binom{3m-1}{2}+1= n+2 +\frac{9m^2-15m}{2} \geq \frac{m(2n-m-1)}{2}$ when $n \leq 5m-2$.

 Construction~\ref{con: m>1} is the basis for Conjecture~\ref{conj:main1}. 
 This conjecture is sharp for $n \geq 5m-2$ if it holds.


\section{Proof of Proposition~\ref{thm:main1}}

In this section, we prove Proposition~\ref{thm:main1}. We use an induction proof that accounts for the maximum number of edges and colors lost when removing a rainbow triangle.

\begin{proof}[Proof of Proposition~\ref{thm:main1}]

    Let
    $$f(k,m,n) = \binom{n -k(m-1)}{2} + \rb(K_{n-k(m-1)}, K_{k}) + k(m-1)(2n-1) -k^2(m-1)^2.$$
    A quick calculations gives $f(k,m,n) - f(k,m-1,n-k) = 2k(n-k) +k(k-1)$

    \begin{claim}
        For any edge-colored graph $G$, if $G$ contains a rainbow $K_k$, denoted $H$, then $e(G) + c(G) - e(G-H) -c(G-H) \leq 2k(n-k) +k(k-1)$. 
    \end{claim}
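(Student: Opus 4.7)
The plan is to count separately the edges that disappear and the colors that disappear when one deletes $V(H)$, and then combine the two bounds. The key observation is that every edge in $E(G)\setminus E(G-H)$ is incident to at least one vertex of $V(H)$, and every color in $C(G)\setminus C(G-H)$ must be realized by at least one such edge.

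First I would bound $e(G)-e(G-H)$. The edges lost fall into two groups: those with both endpoints in $V(H)$, of which there are at most $\binom{k}{2}$, and those with exactly one endpoint in $V(H)$, of which there are at most $k(n-k)$. So
\[
e(G)-e(G-H) \le k(n-k)+\binom{k}{2}.
\]

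Next I would bound $c(G)-c(G-H)$. If a color $c$ belongs to $C(G)$ but not to $C(G-H)$, then every edge of color $c$ in $G$ must be incident to $V(H)$. In particular, one can injectively associate each such color to a distinct edge incident to $V(H)$, which gives the same upper bound
\[
c(G)-c(G-H) \le k(n-k)+\binom{k}{2}.
\]

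Adding the two inequalities yields
\[
e(G)+c(G)-e(G-H)-c(G-H) \le 2k(n-k)+k(k-1),
\]
as claimed. There is no real obstacle here; the argument is essentially an edge-count, and the only thing to watch is to make sure that the color bound uses the fact that a missing color must appear on some edge deleted with $V(H)$, so that the colors-to-edges correspondence is well defined.
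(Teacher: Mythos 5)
Your proposal is correct and matches the paper's argument: both bound the number of deleted edges by $k(n-k)+\binom{k}{2}$ and then observe that the number of lost colors is at most the number of lost edges, since each lost color must be carried by some edge incident to $V(H)$. Your injective colors-to-edges map is just a slightly more explicit phrasing of the same step.
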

    
    \begin{proof}
        There are $n-k$ vertices in $G -H$, so there can be at most $n-k$ edges between any vertex in $A$ and $G -H$, for a maximum of $k(n-k)$ edges.
        Including the edges in $H$, we have $e(G)-e(G -H)\leq k(n-k) + \binom{k}{2}$.  
        Since the number of colors in $G$ is always at most the number of edges, we have $e(G) + c(G) - e(G-H) -c(G-H) \leq 2[e(G) -e(G-H)] \leq 2[k(n-k) +\binom{k}{2}] = 2k(n-k) +k(k-1)$.
    \end{proof}

    We now proceed by induction on $m$.

    \emph{Base Case:} let $m=1$, and let $G$ be a graph on $n \geq k$ vertices. Then
    $$e(G) + c(G) \geq f(k,1,n) = \binom{n}{2} + \rb(K_{n-k(m-1)}, K_{k}).$$
    By Theorem~\ref{cor:cliques}, $G$ contains a rainbow $K_k$.

    \emph{Induction Step:} Assume for all $m' < m$ and graphs $G'$ with $n \geq km'$ vertices, if $e(G) + c(G) \geq f(k,m',n)$, then $G'$ contains $m'$ vertex-disjoint rainbow $K_k$. 
    Let $G$ be a graph on $n\geq mk$ vertices with $e(G) + c(G) \geq f(k,m,n)$. 
    By Theorem~\ref{cor:cliques}, $G$ contains a rainbow $K_k$, denoted $A$. 
    Then 
    $$e(G-A) + c(G-A) \geq f(k,m,n) -2k(n-k) -k(k-1) = f(k,m-1,n-k).$$
    Notice that $G-A$ is a graph on $n\geq k(m-1)$ vertices. By induction, $G-A$ contains $m-1$ vertex disjoint rainbow $K_k$, implying $G$ contains $m$ vertex disjoint rainbow $K_k$.
\end{proof}


\section{Proof of Theorem~\ref{thm:main2}}

Let $A \subseteq V(G)$.
A color $R$ is \emph{saturated by $A$} if for every edge $e$ colored $R$, $e \cap A \neq \emptyset$.
We use $d^s_G(A)$ to denote the number of colors in $G$ saturated by $A$.
Notice that $d^s_G(A) = c(G) - c(G-A)$.

We say a color $R$ is \emph{ideally saturated by $A$} if $R$ is saturated by $A$ but not saturated by any proper subset of $A$.
\begin{obs}\label{obs:saturation}
A color  $R$ is saturated by $A$ iff $R$ is ideally saturated by some subset of $A$.
\end{obs}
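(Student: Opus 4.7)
The plan is to prove the two directions separately, both straight from the definitions, with a minimality argument doing the only real work.

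For the backward direction, I would suppose $R$ is ideally saturated by some $B \subseteq A$. By definition this means, in particular, that $R$ is saturated by $B$, so every edge $e$ with color $R$ satisfies $e \cap B \neq \emptyset$. Since $B \subseteq A$, such an edge also meets $A$, and $R$ is saturated by $A$. This step is essentially a tautology.

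For the forward direction, I would assume $R$ is saturated by $A$ and consider the family $\mathcal{B} = \{B \subseteq A : R \text{ is saturated by } B\}$. This family is nonempty because $A \in \mathcal{B}$, and it is finite (since $A$ is), so we can choose $B^{\ast} \in \mathcal{B}$ of minimum cardinality, or equivalently a minimal element under inclusion. By choice, $R$ is saturated by $B^{\ast}$, and by minimality $R$ is not saturated by any proper subset of $B^{\ast}$. Thus $R$ is ideally saturated by $B^{\ast}$, and $B^{\ast} \subseteq A$ is the required subset.

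There is no genuine obstacle here: the observation is essentially a restatement of the fact that ``ideally saturated'' picks out the minimal saturators, so the proof amounts to unpacking the two definitions and invoking the finiteness of $A$ to extract a minimal element.
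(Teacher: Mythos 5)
Your proof is correct; the paper states this as an observation without proof, and your argument (trivial backward direction plus taking an inclusion-minimal saturating subset $B^{\ast} \subseteq A$ for the forward direction) is exactly the standard minimality argument the paper implicitly relies on.
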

We use $\varphi_G(A)$ to denote the number of colors in $G$ that are ideally saturated by $A$. 
Note that $\varphi_G(\emptyset) = d^s_G(\emptyset) = 0$, and $\varphi_G(\{v\}) = d^s_G(\{v\})$ for any vertex $v \in V(G)$.

\begin{lem}\label{lem1*}
    Let $G$ be a graph, and $A \subseteq V(G)$.
    \begin{enumerate}
        \item[(1)] $d^s_G(A) \leq \sum_{B\subseteq A}{\varphi_G(B)}$.
        \item[(2)] If $A$ ideally saturates $R$, then every vertex in $A$ sees an edge-colored $R$.
    \end{enumerate} 
\end{lem}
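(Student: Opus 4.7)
The plan is to prove both parts directly from the definitions, using Observation~\ref{obs:saturation} for part (1) and a minimality argument for part (2).

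For part (1), I would start by invoking Observation~\ref{obs:saturation}: a color $R$ saturated by $A$ is ideally saturated by some subset $B \subseteq A$. Let $\mathcal{S}_G(A)$ denote the set of colors saturated by $A$ and $\mathcal{I}_G(B)$ the set of colors ideally saturated by $B$. The observation tells us
\[
\mathcal{S}_G(A) \;=\; \bigcup_{B \subseteq A} \mathcal{I}_G(B),
\]
so the desired inequality
\[
d^s_G(A) \;=\; |\mathcal{S}_G(A)| \;\le\; \sum_{B \subseteq A} |\mathcal{I}_G(B)| \;=\; \sum_{B \subseteq A} \varphi_G(B)
\]
follows from the union bound. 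The inequality (rather than equality) is needed because a color can be ideally saturated by more than one subset of $A$: two different minimal saturating sets $B_1, B_2$ for the same color $R$ can coexist whenever the $R$-colored edges form, say, a matching between $B_1 \setminus B_2$ and $B_2 \setminus B_1$, so a color may be counted multiple times on the right-hand side.

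For part (2), I would argue by contradiction. Suppose $A$ ideally saturates $R$ but some vertex $v \in A$ is incident to no edge of color $R$. Since $A$ saturates $R$, every edge $e$ colored $R$ satisfies $e \cap A \ne \emptyset$. Because $v$ is not an endpoint of any such $e$, the intersection $e \cap A$ must lie in $A \setminus \{v\}$; hence $e \cap (A \setminus \{v\}) \ne \emptyset$ for every $R$-colored edge, so the proper subset $A \setminus \{v\}$ already saturates $R$. This contradicts the hypothesis that $A$ ideally saturates $R$.

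Neither step presents a real obstacle: both are unpacking definitions. The only subtle point worth flagging in the write-up is the clarification, in part (1), that the union is not necessarily disjoint, so one genuinely gets $\le$ and not $=$; this is also what motivates the strict ``ideal'' refinement of saturation in the first place, and it is the reason the bound will be useful later when applied to small subsets $B$ (where $\varphi_G(B)$ can be controlled by part (2)).
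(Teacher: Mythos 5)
Your proof is correct and follows essentially the same route as the paper: part (1) is the union bound over subsets via Observation~\ref{obs:saturation} (the paper phrases this as an injection $R \mapsto (R,B_R)$, which is the same argument), and part (2) is the same contrapositive observation that a vertex seeing no $R$-edge can be deleted from a saturating set. Your aside explaining why the inequality in (1) can be strict is accurate and a nice touch, though not needed for the lemma.
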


\begin{proof}
    \emph{(1)} Let $X$ be the set of colors in $G$ saturated by $A$, and $Y =\{(R,B): B \subseteq A$ and $R$ is ideally saturated by $B\}$. 
    Then $|X| = d^s_G(A)$ and $|Y| = \sum_{B \subseteq A}{\varphi_G(B)}$. 
    We define an injective function $f: X \rightarrow Y$, implying $|X| \leq |Y|$.
    For each color $R$ that is saturated by $A$, we can find a subset $B_R\subseteq A$ so that $R$ is ideally saturated by $B_R$ by Observation~\ref{obs:saturation}. Therefore, the function  $f$  given by $R\mapsto (R,B_R)$ exists and is injective. 
    
    \emph{(2)} If there is a vertex $v$ that does not see an edge-colored $R$, then  $A - \{v\}$ must saturate $R$; which is to say that $A$ does not ideally saturate $R$. 
    This is the contrapositive of the claim.
\end{proof}

Furthermore, suppose we have $N \geq 1$ distinct vertices $v_1, ..., v_N \in V(G)$. 
We define $\varphi_G(v_1, ..., v_N) := \sum_{i=1}^N{\varphi_G(\{v_j: 1 \leq j \leq i\})}$.

\begin{lem}\label{lem2*}
    If $\varphi_G(v_1, ..., v_N) \geq k + N -1$, then there are  at least $k$ edges $e_1, ..., e_k \in E(G)$ that satisfy the following. 
    \begin{enumerate}
        \item[(1)] For every $i \leq k$, $e_i \cap \{v_1, ..., v_N\} = \{v_1\}$.
        \item[(2)] $C(e_i) \neq C(e_j)$ when $i \neq j$.
        \item[(3)] Every $C(e_i)$ is saturated by some set $\{v_j: 1 \leq j \leq M\}$ for $M \leq N$.
    \end{enumerate}
\end{lem}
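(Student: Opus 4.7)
The plan is to partition the colors counted by $\varphi_G(v_1,\ldots,v_N)$ according to which prefix set ideally saturates them, and then show that only a handful of those colors fail to produce an edge meeting condition~(1). Set $S_i=\{v_j:1\leq j\leq i\}$ and let $C_i$ denote the set of colors ideally saturated by $S_i$. Because the $S_i$ form a chain, a single color cannot be ideally saturated by two distinct $S_i$: if $i<j$, then $S_i$ is a proper subset of $S_j$ that saturates the color, ruling out ``ideally'' for $S_j$. Hence the $C_i$ are pairwise disjoint and
\[
\sum_{i=1}^N |C_i| \;=\; \varphi_G(v_1,\ldots,v_N) \;\geq\; k+N-1.
\]

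For each $R\in C_i$ I would next produce a \emph{candidate edge} of color $R$ incident to $v_1$ whose other endpoint lies outside $\{v_2,\ldots,v_i\}$. This is where the word ``ideally'' pays off: since $S_i\setminus\{v_1\}$ does not saturate $R$, some edge of color $R$ avoids $\{v_2,\ldots,v_i\}$ entirely, and saturation by $S_i$ then forces that edge to contain $v_1$. Call $R$ \emph{good} if some candidate edge has its other endpoint outside $\{v_1,\ldots,v_N\}$, and \emph{bad} otherwise. A good color supplies an edge satisfying condition~(1) directly; condition~(3) holds because $R\in C_i$ implies $R$ is saturated by $S_i$ with $i\leq N$, and condition~(2) follows from the disjointness of the $C_i$.

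The core estimate is that the number of bad colors is at most $N-1$. If $R\in C_i$ is bad, then every edge of color $R$ incident to $v_1$ has its other endpoint in $\{v_2,\ldots,v_N\}$, and \Cref{lem1*}(2) guarantees that at least one such edge exists (applied to $v_1\in S_i$), so I can pick some $v_{j(R)}\in\{v_2,\ldots,v_N\}$ for which $v_1 v_{j(R)}$ has color $R$. The assignment $R\mapsto v_{j(R)}$ from bad colors into $\{v_2,\ldots,v_N\}$ is injective, because two distinct bad colors with the same image would force the single simple edge $v_1 v_{j(R)}$ to carry two different colors. Consequently at most $N-1$ colors are bad, leaving at least $k+N-1-(N-1)=k$ good colors from which to select $e_1,\ldots,e_k$.

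The main obstacle is the bookkeeping around ``ideally saturated'': verifying that this hypothesis genuinely forces the candidate edge to land on $v_1$, that each color belongs to at most one $C_i$ so the sum over $i$ counts with multiplicity one, and that the injective charging of bad colors to $\{v_2,\ldots,v_N\}$ exactly absorbs the slack of $N-1$ in the hypothesis. Once these three items are in place, the conclusion falls out of the counting.
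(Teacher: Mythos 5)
Your proof is correct and follows essentially the same route as the paper's: you partition the relevant colors by the prefix set $S_i$ that ideally saturates them, use ideal saturation to pin an edge of each such color to $v_1$, and then observe that at most $N-1$ of these colors can be ``absorbed'' by edges from $v_1$ into $\{v_2,\ldots,v_N\}$ (the paper phrases this as a partition of the edges at $v_1$ into two sets $W$ and $Z$ with $|W|\leq N-1$, while you phrase it as an injection from bad colors into $\{v_2,\ldots,v_N\}$, which is the same count). No gaps.
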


\begin{proof}
    Let $A_i = \{v_1, ..., v_i\}$ and $B_i = \{R: A_i$ ideally saturates $R\}$ for $M \leq N$. 
    Then $\varphi_G(A_i) = |B_i|$. 
    By definition, we know $B_i \cap B_j = \emptyset$ when $i \neq j$.
    
    We know $v_1 \in A_i$ for every $i$.
    Then by Lemma \ref{lem1*} \emph{(2)}, for every $i$ and for every $R \in B_i$, there is an edge incident to $v_1$ colored $R$. 
    There are $\sum_{i=1}^N{|B_i|} = \varphi_G(v_1, ..., v_N)$ edges $e_1, ..., e_{\varphi_G(v_1, ..., v_N)}$, each incident to $v_1$, satisfying \emph{(2)} and \emph{(3)}.

    All edges incident to $v_1$ can be sorted into two disjoint sets: 
    \begin{align*}
        W &= \{ \{v_1,v\} : v \in A_N \setminus\{v_1\}\},\\
        Z &= \{ \{v_1,v\} : v \in V(G) \setminus A_N\}.
    \end{align*} 
    Note that the edges in $Z$ are the edges satisfying \emph{(1)}.
    Since $|W| = N-1$, it follows that there must be at least $k$ edges $e_i$ in $Z$.
\end{proof}

\begin{proof}[Proof of Theorem \ref{thm:main2}]

The case for $m=1$ is proved by Theorem \ref{thm: m=1}. Let $m\geq 2$.
Assume that  $G$ is a complete graph on $n \geq 9m+8$ vertices 
and that $G$ does not contain a proper $mK_3$.
Consider the following algorithm:
    
\begin{enumerate}
    \item Set $G_0:=G$ and $i:=0$.
        
    \item If there exists $N$ vertices $v_1, ..., v_N$ for $N \leq 3$ where $\varphi_{G_i}(v_1, ..., v_N) > 3(m-i) +N$, let $G_{i+1} = G_i -\{v_1\}$, increment $i=i+1$, and return to step 2.

    \item Else, if there exists a set of vertices $\{u, v, w\}$ in $G_i$ with $uvw$ being a rainbow triangle and $d^s_{G_i}(\{u, v, w\}) \leq n-i-1$, let $G_{i+1} = G_i -\{u, v, w\}$, increment $i=i+1$, and return to step 2.
      
    \item Return $k=i$.
\end{enumerate}

\begin{claim}\label{claim:add.vertex}
    For $0 \leq i \leq k$, $G_i$ contains a proper $(k-i)K_3$.
\end{claim}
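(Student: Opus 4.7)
The plan is to prove the claim by reverse induction on $i$, running from the trivial base case $i=k$ (at which $G_k$ contains an empty collection of rainbow triangles) down to $i=0$. At the inductive step I assume a proper $(k-i-1)K_3$ exists in $G_{i+1}$ with vertex set $T$ of size $3(k-i-1)$, and I try to extend it to a proper $(k-i)K_3$ in $G_i$. The argument splits into cases based on which step of the algorithm took $G_i$ to $G_{i+1}$.

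If step 3 fired, then $G_{i+1} = G_i - \{u,v,w\}$ for a rainbow triangle $uvw$ in $G_i$. Since $T \subseteq V(G_{i+1})$ is vertex-disjoint from $\{u,v,w\}$, the collection $T \cup \{uvw\}$ is immediately a proper $(k-i)K_3$ in $G_i$.

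If step 2 fired, then $G_{i+1} = G_i - v_1$ and $\varphi_{G_i}(v_1,\dots,v_N) > 3(m-i)+N$ for some $N \le 3$; set $A = \{v_1,\dots,v_N\}$. I will apply \Cref{lem2*} with its parameter ``$k$'' chosen as $3(m-i)+2$ (using $\varphi_{G_i}(v_1,\dots,v_N) \ge 3(m-i)+N+1$) to obtain at least $3(m-i)+2$ edges $v_1 x_j$ with distinct colors $C_j$, each $x_j \in V(G_i) \setminus A$, and each $C_j$ saturated by some subset of $A$. Write $X = \{x_j\}$. The key observation is that, since every $C_j$ is saturated by a subset of $A$ and $X \cap A = \emptyset$, no edge between two vertices of $X$ can carry any color $C_j$; hence for any distinct $x_p, x_q \in X$ the triangle $v_1 x_p x_q$ has three distinct colors $C_p, C_q, C(x_p x_q)$ and is therefore rainbow. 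To extend $T$ I need such a pair inside $X \setminus T$; since $|X \setminus T| \ge 3(m-i)+2 - 3(k-i-1) = 3(m-k)+5$, this pair exists as soon as $k \le m+1$. Adjoining $v_1 x_p x_q$ to $T$ then produces the desired proper $(k-i)K_3$ in $G_i$.

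The main obstacle will be guaranteeing the bound $k \le m+1$ used in the step-2 case. Within the containing proof of \Cref{thm:main2} this should come for free: the claim applied at $i=0$ gives a proper $kK_3$ in $G=G_0$, and the standing assumption that $G$ has no proper $mK_3$ forces $k \le m-1$. So the induction closes inside the broader proof by contradiction, and the remaining work of the theorem will be to exploit the algorithm's termination condition together with the hypothesis $n \ge 9m+8$ to contradict $c(G) > mn - \binom{m+1}{2}$.
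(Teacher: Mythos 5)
Your proposal follows the paper's proof essentially line for line: the same reverse induction on $i$, the same two cases according to which step of the algorithm fired, the same application of Lemma~\ref{lem2*} to produce $3(m-i)+2$ distinctly-colored saturated edges at $v_1$, and the same observation that two of their endpoints lying outside the current triangle system close a rainbow triangle with $v_1$ (using completeness of $G$ for the edge $x_px_q$, and saturation by a subset of $A$ to rule out a color collision on $x_px_q$). Your bookkeeping is, if anything, slightly more careful than the paper's.

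The one genuine problem is exactly the point you flag yourself: the step-2 case needs $|X\setminus T|\ge 2$, i.e.\ $k\le m+1$, and your proposed justification is circular --- you cannot invoke the claim at $i=0$ to conclude $k\le m-1$ while you are still in the middle of proving the claim by induction. (To be fair, the paper shares this gap: it silently uses $3(m-i)+2\ge 3(k-i)+2$, i.e.\ $k\le m$, inside the proof of the claim, and only afterwards deduces $k<m$ \emph{from} the claim.) The gap is not cosmetic, because nothing a priori bounds $k$: once $i\ge m+1$ the threshold $3(m-i)+N$ in step~2 is nonpositive, so the algorithm can keep deleting vertices for a long time, and for such runs the literal statement ``$G$ contains a proper $kK_3$'' cannot be what the extension argument delivers. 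The clean repair is to apply your extension argument only to the first $m$ iterations: show by reverse induction on $0\le i\le m$ that \emph{if} the algorithm performs at least $m$ steps, then $G_i$ contains a proper $(m-i)K_3$ --- here the surplus count is $3(m-i)+2-3(m-i-1)=5\ge 2$, so no hypothesis on $k$ is needed --- whence $G=G_0$ would contain a proper $mK_3$, contradicting the standing assumption. This gives $k<m$ unconditionally, and with that bound in hand your induction (and the paper's) closes exactly as written.
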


\begin{proof}
We will prove the claim by reverse induction on $i$. 
The claim is trivially true if $i=k$. 
Assume the claim is true for $1 < i \leq k$. 
Let $H$ be the established proper $(k-i)K_3$ in $G_i$. 
There are two cases; either $G_i = G_{i-1} -\{v\}$ or $G_i = G_{i-1} -\{u,v,w\}$.

\textbf{Case 1:} 
Assume $G_i = G_{i-1} -\{v\}$ and let $v_1=v$. 
This implies that $\varphi_{G_{i-1}}(v_1,\dots, v_N)\geq 3(m-i)+N+1$ with $1\leq N\leq 3$.
By Lemma~\ref{lem2*} (taking $k=3(m-i)+1$), we know there are  edges $e_i$ with $1\leq i \leq 3(m-i)+2$ incident upon $v_1$ with distinct colors which are saturated by $\{v_1,\dots,v_N\}$. 
Let $X$ be the set of neighbors of $v_1$ in $G_{I-1}$ that use some edge $e_i$. 
Note that by Lemma~\ref{lem2*}, $v_2,\dots,v_N\not\in X$. 
Since $H$ has $3(k-i)$ vertices, there exists $u, w \in X\setminus V(H)$. Since the colors of $vu$ and $vw$ are saturated by $\{v_1,\dots, v_N\}$ and $u,w\not\in\{v_1,\dots, v_N\}$, it follows that $uvw$ is a rainbow triangle that is vertex-disjoint from $H$. 
Then $H \cup uvw$ is a proper $(k-i+1)K_3$.

\textbf{Case 2:} 
Assume $G_i = G_{i-1} -\{u,v,w\}$. 
By construction, $uvw$ is a rainbow triangle in $G_{i-1}$ that is vertex-disjoint from $H$. 
Then $H \cup uvw$ is a proper $(k-i+1)K_3$.
\end{proof}

From this claim, we can infer that $k<m$.

\begin{claim}\label{claim:triangle.sat}
    There is no rainbow triangle in $G_k$, and $c(G_k) \leq n-k-1$
\end{claim}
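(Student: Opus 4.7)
The plan is to exploit that the algorithm terminated at step $k$, so neither Step~2 nor Step~3 is triggered by $G_k$. Failure of Step~2 yields $\varphi_{G_k}(v_1,\ldots,v_N)\leq 3(m-k)+N$ for every tuple of $N\leq 3$ distinct vertices of $G_k$, while failure of Step~3 says that every rainbow triangle $uvw$ in $G_k$ satisfies $d^s_{G_k}(\{u,v,w\})\geq n-k$. Proving both conclusions of the claim reduces to combining these two facts with the structural lemmas already established.

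First I would show that $G_k$ has no rainbow triangle by contradiction. Suppose $uvw$ is a rainbow triangle in $G_k$. Lemma~\ref{lem1*}(1) gives
\[ n-k\ \leq\ d^s_{G_k}(\{u,v,w\})\ \leq\ \sum_{B\subseteq\{u,v,w\}}\varphi_{G_k}(B). \]
Writing $\alpha_x=\varphi_{G_k}(\{x\})$, $\beta_{xy}=\varphi_{G_k}(\{x,y\})$, $\gamma=\varphi_{G_k}(\{u,v,w\})$, $a=3(m-k)$, and $S=\sum_x\alpha_x+\sum_{xy}\beta_{xy}+\gamma$, the failure of Step~2 applied to each ordered pair $(x,y)$ and ordered triple $(x,y,z)$ from $\{u,v,w\}$ gives $\alpha_x+\beta_{xy}\leq a+2$ and $\alpha_x+\beta_{xy}+\gamma\leq a+3$. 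Summing the six pair inequalities yields $2(\sum\alpha+\sum\beta)\leq 6(a+2)$, so $S-\gamma\leq 3a+6$; summing the six triple inequalities similarly yields $S+2\gamma\leq 3a+9$. The linear combination $2(S-\gamma)+(S+2\gamma)=3S\leq 9a+21$ gives $S\leq 3a+7=9(m-k)+7$. Hence $n-k\leq 9(m-k)+7$, i.e.\ $n\leq 9m+7-8k\leq 9m+7$, contradicting $n\geq 9m+8$. So $G_k$ is rainbow-triangle-free.

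For the color bound, $G_k$ is a complete graph (vertex deletion preserves completeness) on $n_k=|V(G_k)|$ vertices, and each iteration of the algorithm removes at least one vertex, so $n_k\leq n-k$. Since $G_k$ has no rainbow triangle, the contrapositive of Theorem~\ref{thm: m=1} applied to $G_k$ gives $\binom{n_k}{2}+c(G_k)\leq\binom{n_k+1}{2}-1$, whence $c(G_k)\leq n_k-1\leq n-k-1$.

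The main obstacle is extracting the sharp constant $9(m-k)+7$ in the LP bound; a naive argument (for instance, summing three disjoint orderings and bounding $\gamma$ separately) only produces $9(m-k)+9$, which is insufficient under the hypothesis $n\geq 9m+8$. The constants in the theorem statement appear to have been calibrated precisely so that this optimization is tight at $k=0$.
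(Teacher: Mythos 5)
Your proof is correct and follows essentially the same route as the paper: both conclusions are extracted from the failure of Steps 2 and 3 at termination, combined with Lemma~\ref{lem1*}(1) for the rainbow-triangle contradiction and the contrapositive of Theorem~\ref{thm: m=1} for the color bound. The only difference is local and answers the ``main obstacle'' you flag at the end: instead of averaging over all six ordered pairs and six ordered triples to squeeze out $S\leq 9(m-k)+7$, the paper uses the exact identity $\sum_{\emptyset\neq B\subseteq\{u,v,w\}}\varphi_{G_k}(B)=\varphi_{G_k}(u,v)+\varphi_{G_k}(v,w)+\varphi_{G_k}(w,u,v)$ (the three telescoping sums partition the seven nonempty subsets) and applies the Step-2 bounds $3(m-k)+2$, $3(m-k)+2$, $3(m-k)+3$ directly to those three terms, reaching the same constant with no optimization.
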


\begin{proof}
Assume that $G_k$ contains a rainbow triangle $uvw$.
According to the algorithm, for any $N \leq 3$ vertices $v_1, ..., v_N$, $\varphi_{G_k}(v_1, ..., v_N) \leq 3(m-k) +N$. 
Therefore, we have $\varphi_{G_k}(u,v),\varphi(v,w)_{G_k}\leq 3(m-k)+2$ and $\varphi_{G_k}(w,u,v)\leq 3(m-k)+3$.
Furthermore, $d^s_{G_k}(\{u,v,w\}) > n-k-1$.

By Lemma~\ref{lem1*}, the bounds in the previous paragraph, and the assumption that $n\geq 9m+8$, we have 
\begin{align*}
    n-k-1 
    <&~ d^s_{G_k}(\{u,v,w\})\\ 
    \leq&~ \sum_{B \subseteq \{u,v,w\}}\varphi_{G_k}(B)\\
    =&~\textcolor{red}{\varphi_{G_k}(\{u\})}  + \textcolor{blue}{\varphi_{G_k}(\{v\})} +\varphi_{G_k}(\{w\}) \\
    &+ \textcolor{red}{\varphi_{G_k}(\{u,v\})}+\textcolor{blue}{\varphi_{G_k}(\{v,w\})}+\varphi_{G_k}(\{w,u\}) +\varphi_{G_k}(\{ w,u,v\})\\
    =&~ \textcolor{red}{\varphi_{G_k}(u,v)} + \textcolor{blue}{\varphi_{G_k}(v,w)}+ \varphi_{G_k}(w,u,v)\\
    \leq&~ 9(m-k)+7\\
    \leq&~n -k -1
\end{align*}
which is a contradiction. 
By Theorem \ref{thm: m=1}, $k \geq 1$ and $c(G_k) \leq n-k-1$.
\end{proof}

We now assume that $G$ does not contain a proper $mK_3$, and will show that $c(G)\leq mn-\binom{m+1}{2}$. 
Let $W(G_i)= c(G_{i-1}) - c(G_{i})$.

\begin{claim}
    For all $1 \leq i \leq k$, we have $W(G_i) \leq n-i.$
\end{claim}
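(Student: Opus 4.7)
The plan is to split into two cases according to which algorithm step produced $G_i$ from $G_{i-1}$. A useful preliminary observation is that every iteration (whether Step~2 or Step~3) removes at least one vertex, so after $i-1$ iterations we have $|V(G_{i-1})| \leq n - (i-1)$, and $G_{i-1}$ remains a complete graph as an induced subgraph of $G$.

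If $G_i = G_{i-1} - \{v_1\}$ was obtained via Step~2, then $W(G_i) = c(G_{i-1}) - c(G_i) = d^s_{G_{i-1}}(\{v_1\})$ by the identity $d^s_G(A) = c(G) - c(G-A)$. Every color saturated by $\{v_1\}$ must appear on some edge incident to $v_1$, so this count is at most $\deg_{G_{i-1}}(v_1) = |V(G_{i-1})| - 1 \leq n - i$. Notably, this argument does not invoke the $\varphi$-condition that triggered Step~2 at all; the completeness of $G_{i-1}$ is what does the work. If instead $G_i = G_{i-1} - \{u,v,w\}$ was obtained via Step~3, then during that iteration the algorithm's counter was $i-1$, so the Step~3 hypothesis reads $d^s_{G_{i-1}}(\{u,v,w\}) \leq n - (i-1) - 1 = n - i$, which is exactly $W(G_i) \leq n - i$ by the same identity.

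The main subtlety is keeping the indexing straight: the algorithm's counter increments after the step is taken, whereas the lemma indexes $W(G_i)$ by the post-increment value. Once this is matched up, the proof in each case is a single-line deduction — Case~2 directly from the Step~3 hypothesis, and Case~1 from the elementary observation that a vertex in a complete graph on $|V(G_{i-1})|$ vertices has degree exactly $|V(G_{i-1})| - 1$.
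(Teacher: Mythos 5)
Your proof is correct and follows essentially the same two-case argument as the paper: in the Step~2 case you bound $d^s_{G_{i-1}}(\{v\})$ by the degree of $v$ in $G_{i-1}$, and in the Step~3 case the bound is just the algorithm's own hypothesis after the index shift. Your explicit care with the pre- versus post-increment indexing is a nice touch but does not change the substance.
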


\begin{proof}
Once again, there are two cases.

\textbf{Case 1:} 
If $G_i = G_{i-1} -v$, then $W(G_i) = d^s_{G_{i-1}}(\{v\}) \leq |V(G_{i-1})| -1 \leq n-i$.

\textbf{Case 2:} 
If $G_i = G_{i-1} -\{u,v,w\}$, then $W(G_i) = d^s_{G_{i-1}}(\{u,v,w\}) \leq n-i$.
\end{proof}

Now we have
\begin{align*}
    c(G)&= \sum_{i=1}^{k}W(G_i) +c(G_k)\leq \sum_{i=1}^{k+1} [n-i] \leq \sum_{i=1}^{m} [n-i]= mn - \binom{m+1}{2}.
\end{align*}
This completes the proof.
\end{proof}

There are some difficulties in generalizing the proof of Theorem~\ref{thm:main2} to non-complete host graphs. 
The trouble in generalizing to non-complete host graphs (and thus addressing Conjecture~\ref{conj:main1} directly) arises in Case 1 of the proof of Claim~\ref{claim:add.vertex}. 
In this case, we assume that $G_i=G_{i-1}-\{v\}$ and that $G_i$ contains a proper $(k-i)K_3$ subgraph. 
The fact that we can extend this $(k-i)K_3$ subgraph to an $(k-i+1)K_3$ in $G_{i-1}$ by adding a vertex $v$ and two of its neighbors along edges colored with a saturated color. 
However, for this argument to work, we need that the neighbors of $v$ along edges of saturated colors are themselves connected by edges, which is true in complete graphs. 
It was not clear to us how to modify step 2. in the greedy algorithm used in the proof of Theorem~\ref{thm:main2} to deal with a non-complete host graph. 

Step 2. of the algorithm would also need to be modified in order to generalize our argument to find proper $mK_r$ for $r>3$. 
First, step 2. would need to be more sophisticated in order to control the number of colors that proper (or rainbow) cliques saturate as was done in the proof of Claim~\ref{claim:triangle.sat}. 
Second, it is not clear that deleting a single vertex that saturate many colors as in step 2. will guarantee that proper or rainbow $(m-i)K_r$ can be extended when the deleted vertex is added back to the graph.


\section*{Acknowledgments}
This research was conducted in Summer 2023 as a part of the Summer Undergraduate Applied Mathematics Institute at Carnegie Mellon University. We thank Dr. David Offner and Dr. Michael Young for their support, as well as all other people in the program for being amazing. We also thank CMU for funding this project and hosting SUAMI.

J.K. is supported by the National Science Foundation through NSF RTG grant DMS-2136228.


\bibliographystyle{plainurl}
\bibliography{paper.bib}

\begin{thebibliography}{1}

\bibitem{erdos10anti}
P~Erd\H{o}s, M~Simonovits, and VT~S{\'o}s.
\newblock Anti-ramsey theorems, infinite and finite sets (colloq., keszthely,
  1973; dedicated to p. erdos on his 60th birthday), vol ii, 633-643.
\newblock In {\em Colloq. Math. Soc. J{\'a}nos Bolyai}, volume~10.

\bibitem{hu2020vertex}
Jie Hu, Hao Li, and Donglei Yang.
\newblock Vertex-disjoint rainbow triangles in edge-colored graphs.
\newblock {\em Discrete Mathematics}, 343(12):112117, 2020.

\bibitem{li2014rainbow}
Binlong Li, Bo~Ning, Chuandong Xu, and Shenggui Zhang.
\newblock Rainbow triangles in edge-colored graphs.
\newblock {\em European Journal of Combinatorics}, 36:453--459, 2014.

\bibitem{li2022edge}
Luyi Li and Xueliang Li.
\newblock Edge-disjoint rainbow triangles in edge-colored graphs.
\newblock {\em Discrete Applied Mathematics}, 318:21--30, 2022.

\bibitem{li2022vertex}
Luyi Li and Xueliang Li.
\newblock Vertex-disjoint rainbow cycles in edge-colored graphs.
\newblock {\em Discrete Mathematics}, 345(7):112878, 2022.

\bibitem{li2016color}
Ruonan Li, Bo~Ning, and Shenggui Zhang.
\newblock Color degree sum conditions for rainbow triangles in edge-colored
  graphs.
\newblock {\em Graphs and combinatorics}, 32:2001--2008, 2016.

\bibitem{IntroGraphTheory}
Douglas West.
\newblock {\em Introduction to Graph Theory}.
\newblock Pearson Education, 2001.

\bibitem{wu2023anti}
Fangfang Wu, Shenggui Zhang, Binlong Li, and Jimeng Xiao.
\newblock Anti-ramsey numbers for vertex-disjoint triangles.
\newblock {\em Discrete Mathematics}, 346(1):113123, 2023.

\bibitem{XU2016193}
Chuandong Xu, Xiaoxue Hu, Weifan Wang, and Shenggui Zhang.
\newblock Rainbow cliques in edge-colored graphs.
\newblock {\em European Journal of Combinatorics}, 54:193--200, 2016.

\end{thebibliography}

\end{document}